\newtheorem{thm}{Theorem}
\newenvironment{definition}[1][Definition]{\begin{trivlist}
\item[\hskip \labelsep {\bfseries #1}]}{\end{trivlist}}
\newtheorem{lemma}[thm]{Lemma}
\title{Convergence Rates of Inertial Splitting Schemes for Nonconvex Composite Optimization}
\name{Patrick R. Johnstone and Pierre Moulin}
\address{Department of Electrical and Computer Engineering\\
University of Illinois at Urbana-Champaign\\
\small Email: prjohns2@illinois.edu, moulin@ifp.uiuc.edu}
\begin{document}
\ninept
\maketitle
\begin{abstract}
We study the convergence properties of a general inertial first-order proximal splitting algorithm for solving nonconvex nonsmooth optimization problems. Using the Kurdyka--\L ojaziewicz (KL) inequality we establish new convergence rates which apply to several inertial algorithms in the literature. Our basic assumption is that the objective function is semialgebraic, which lends our results broad applicability in the fields of signal processing and machine learning. The convergence rates depend on the exponent of the ``desingularizing function" arising in the KL inequality. Depending on this exponent, convergence may be finite, linear, or sublinear and of the form $O(k^{-p})$ for $p>1$. 
\end{abstract}
\begin{keywords}
Kurdyka--{\L}ojaziewicz Inequality, Inertial forward-backward splitting, heavy-ball method, convergence rate, first-order methods.
\end{keywords}
\section{Introduction}
We are interested in solving the following optimization problem
\begin{eqnarray}
\min_{x\in\re^n}\Phi(x) = f(x) + g(x)\label{prb1}
\end{eqnarray}
where $g:\re^n\to\re\cup\{+\infty\}$ is lower semicontinuous (l.s.c.) and $f:\re^n\to\re$ is differentiable with Lipschitz continuous gradient. We also assume that $\Phi$ is \emph{semialgebraic}  \cite{attouch2013convergence}, meaning there integers $p,q\geq 0$ and polynomial functions $P_{ij},Q_{ij}:\re^{n+1}\to \re$ such that
$$
\{(x,y):y\geq f(x)\} =\underset{j=1}{\overset{p}{\cup}}\underset{i=1}{\overset{q}{\cap}}\{z\in\re^{n+1}: P_{ij}(z)=0,Q_{ij}(z)<0\}.
$$
We make no assumption of convexity. Semialgebraic objective functions in the form of (\ref{prb1}) are widespread in machine learning, image processing, compressed sensing, matrix completion, and computer vision \cite{IHT,lazzaro2016nonconvex,elad2006image,chen2004recovering,zhang2006gene,candes2015phase,ji2010robust}. We will list a few examples below. 

In this paper we focus on the application of Prob. (\ref{prb1}) to \emph{sparse least-squares} and regression. This problem arises when looking for a sparse solution to a set of underdetermined linear equations. Such problems occur in compressed sensing, computer vision, machine learning and many other related fields. Suppose we observe $y=Ax+b$ where $b$ is noise and wish to recover $x$ which is known to be sparse, however the matrix $A$ is ``fat" or poorly conditioned. One approach is to solve (\ref{prb1}) with $f$ a loss function modeling the noise $b$ and $g$ a regularizer modeling prior knowledge of $x$, in this case sparsity. The correct choice for $f$ will depend on the noise model and may be nonconvex. Examples of appropriate nonconvex semialgebraic choices for $g$ are the $\ell_0$ pseudo-norm, and the smoothly clipped absolute deviation (SCAD) \cite{fan2001variable}. The prevailing convex choice is the $\ell_1$ norm which is also semialgebraic. SCAD has the advantage over the $\ell_1$-norm that it leads to nearly unbiased estimates of large coefficients \cite{fan2001variable}. Furthermore unlike the $\ell_0$ norm SCAD leads to a solution which is continuous in the data matrix $A$. Nevertheless $\ell_1$-based methods continue to be the standard throughout the literature due to convexity and computational simplicity. 

For Problem (\ref{prb1}), \emph{first-order methods} have been found to be computationally inexpensive, simple to implement, and effective solvers \cite{prox_signalProcessing}. 
In this paper we are interested in first order methods of the \emph{inertial} type, also known as \emph{momentum} methods. These methods generate the next iterate using more than one previous iterate so as to mimic the inertial dynamics of a model differential equation. In many instances both in theory and in practice, inertial methods have been shown to converge faster than noninertial ones \cite{polyak1964some}. Furthermore for nonconvex problems it has been observed that using inertia can help the algorithm escape local minima and saddle points that would capture other first-order algorithms \cite[Sec 4.1]{boct2016inertial}. A prominent example of the use of inertia in nonconvex optimization is training neural networks, which goes under the name of  \emph{back propagation with momentum} \cite{sutskever2013importance}. In convex optimization a prominent example is the heavy ball method \cite{polyak1964some}.

%Other applications include low ra 
%\item  \emph{low rank matrix completion}: In this problem one wants to complete the unknown entries of a ``low-rank" matrix. This problem is often approached by minimizing objectives involving the nonconvex and semialgebraic rank function. Other nonconvex approaches may use the fact that a low rank matrix can be factorized into the product of two smaller matrices \cite{jin2016provable} which also yields a nonconvex semialgebraic objective function. The most prominent application of matrix completion is in recommender systems although there are many others such as phase retrieval \cite{candes2015phase} and video denoising \cite{ji2010robust}. 
%\item \emph{Image processing with nonconvex objectives.} In this application it is often better to use a nonconvex loss function because it describes the true noise model more accurately than one associated with a convex loss. Other settings, such as linear diffusion based image compression, naturally lead to nonconvex objectives in the form of (\ref{prb1}) \cite{boct2016inertial,ochs2014ipiano}.
%\end{enumerate}
Over the past decade the KL inequality has come to prominence in the optimization community as a powerful tool for studying both convex and nonconvex problems. It is very general, applicable to almost all problems encountered in real applications, and powerful because it allows researchers to precisely understand the local convergence properties of first-order methods. The inequality goes back to \cite{kurdyka1998gradients,lojasiewicz1963propriete}. In \cite{attouch2010proximal,attouch2009convergence,frankel2014splitting} the KL inequality was used to derive convergence rates of descent-type first order methods. The KL inequality was used to study convex optimization problems in \cite{bolte2015error,li2016calculus}. 

Nonconvex optimization has traditionally been challenging for researchers to study since generally they cannot distinguish a local minimum from a global minimum. Nevertheless, for some applications such as empirical risk minimization in machine learning, finding a good local minimum is all that is required of the optimization solver \cite[Sec. 3]{domingos2012few}. In other problems local minima have been shown to be global minima \cite{ge2016matrix}. %Furthermore, recent work has shown that under reasonable conditions first-order methods will never converge to saddle points in practice  \cite{lee2016gradient,panageas2016gradient}. 

% The best researchers can hope for in nonconvex optimization is to develop algorithms which can find local minimizers. 

%In contrast generic worst-case convergence rates in the convex setting, such as the well-known $O(1/k)$ of gradient descent, can be pessimistic. For example consider $f(x)=x^p$ for $p>2$ and the standard gradient descent iterates $x_k=x_{k-1}-\gamma f'(x_{k-1})$ for an appropriately small $\gamma$. Then the iterates converge to $0$ at the rate $x_k = O(k^{-\frac{1}{p-2}})$, which is predicted exactly by the KL analysis \cite[Thm 4]{frankel2014splitting}. On the other hand, standard convex arguments guarantee $O(k^{-\frac{1}{p}})$ which is pessimistic. 

{\bf Contributions:} The main contribution of this paper is to determine for the first time the local convergence rate of a broad family of inertial proximal splitting methods for solving Prob. (\ref{prb1}). The family of methods we study includes several algorithms proposed in the literature for which convergence rates are unknown. The family was proposed in \cite{liang2016multi}, where it was proved that the iterates converge to a critical point. However the \emph{convergence rate}, e.g. how fast the iterates converge, was not determined. In fact in \cite{liang2016multi}, local linear convergence was shown  under a partial smoothness assumption. In contrast we do not assume partial smoothness and our results are far more general. We use the KL inequality and show finite, linear, or sublinear convergence, depending on the KL exponent (see Sec. 2). The main inspiration for our work is \cite{frankel2014splitting} which studied convergence rates of several \emph{noninertial} schemes using the KL property. However, the analysis of \cite{frankel2014splitting} cannot be applied to inertial methods. Our approach is to extend the framework of \cite{frankel2014splitting} to the inertial setting. This is done by proving convergence rates of a multistep Lyapunov potential function which upper bounds the objective function. We also prove convergence rates of the iterates and extend a result of \cite[Thm. 3.7 ]{li2016calculus} to show that our multistep Lyapunov potential has the same KL exponent as the objective function.  Finally we include experiments to illustrate the derived convergence rates.

{\bf Notation:}
Given a closed set $C$ and point $x$, define $d(x,C)\triangleq\min\{\|x-c\|:c\in C\}$. For a sequence $\{x_k\}_{k\in\mathbb{N}}$ let $\Delta_k\triangleq\|x_{k}-x_{k-1}\|$. We say that $x_k\to x^*$ linearly with convergence factor $q\in(0,1)$ if there exists $C>0$ such that $\|x_k-x^*\|\leq Cq^k$.
\section{Mathematical Background}
In this section we give an overview of the relevant mathematical concepts. 
%The \emph{Fr\'{e}chet subdifferential} of a (l.s.c.) function $\Phi:\mathbb{R}^n\to \overline{\re}$ at a point $x\in\text{dom}(\Phi)$ is defined as 
%\begin{eqnarray*}
%\partial^F \Phi(x)\triangleq \left\{v:\underset{z\to x}{\lim\inf}\left(\Phi(z)-\Phi(x)-\langle %v,z-x\rangle\geq 0\right)\right\}.
%\end{eqnarray*}
%The (limiting) subdifferential is defined as
%\begin{eqnarray*}
%\partial\Phi(x)
%\triangleq
%\{
%v:
%\exists x_k\to x,\Phi(x_k)\to\Phi(x),v_k\in\partial^F\Phi(x_k)\to v
%\}
%\end{eqnarray*}
%Note that $\partial^F \Phi(x)\subset \partial \Phi(x)$ and $\partial\Phi(x)$ is closed. 
We use the notion of the limiting subdifferential $\partial \Phi(x)$ of a l.s.c. function $\Phi$. For the definition and properties we refer to \cite[Sec 2.1]{attouch2013convergence}.
A necessary (but not sufficient) condition for $x$ to be a minimizer of $\Phi$ is $0\in\partial\Phi(x)$. The set of critical points of $\Phi$ is $\text{crit}(\Phi) \triangleq \{x:0\in\partial\Phi(x)\}$.
A useful notion is the \emph{proximal operator} w.r.t. a l.s.c. proper function $g$, defined as
\begin{eqnarray*}
\prox_{g}(x)=\underset{x'\in \mathbb{R}^n}{\arg\min}\,\,\, g(x')+\frac{1}{2}\|x-x'\|^2,
\end{eqnarray*}
which is always nonempty. Note that, unlike the convex case, this operator is not necessarily single-valued.  
\begin{definition}
A function $f:\mathbb{R}^n\to\overline{\re}$ is said to have the Kurdyka-Lojasiewicz (KL) property at $x^*\in\text{dom}\,\partial f$ if there exists $\eta\in(0,+\infty]$, a neighborhood $U$ of $x^*$, and a continuous and concave function $\varphi:[0,\eta)\to\re_+$ such that
\begin{enumerate}[(i)]
\item $\varphi(0)=0$,
\item $\varphi$ is $C^1$ on $(0,\eta)$ and for all $s\in(0,\eta)$, $\varphi'(s)>0$,
\item for all $x\in U\cap\{x: f(x^*)<f(x)<f(x^*)+\eta\}$ the KL inequality holds:
\begin{eqnarray}\label{definitiveKL}
\varphi'(f(x)-f(x^*))d(0,\partial f(x))\geq 1. 
\end{eqnarray}
\end{enumerate}
If $f$ is semialgebraic, then it has the KL property at all points in $\text{dom}\,\partial f$, and $\varphi(t)=\frac{c}{\theta}t^\theta$ for $\theta\in(0,1]$.  
\end{definition}
In the semialgebraic case we will refer to $\theta$ as the \emph{KL exponent} (note that some other papers use $1-\theta$ \cite{li2016calculus}).
For the special case where $f$ is smooth, (\ref{definitiveKL}) can be rewritten as 
$
\|\nabla(\varphi\circ (f(x)-f(x^*))\|\geq 1,
$
which shows why $\varphi$ is called a ``desingularizing function".
The slope of $\varphi$ near the origin encodes information about the ``flatness" of the function about a point, thus the KL exponent provides a way to quantify convergence rates of iterative first-order methods. 

For example the 1D function $f(x)=|x|^p$ for $p\geq2$ has desingluarizing function $\varphi(t)= t^{\frac{1}{p}}$. The larger $p$, the flatter $f$ is around the origin, and the slower gradient-based methods will converge. In general, functions with  smaller exponent $\theta$ have slower convergence near a critical point \cite{frankel2014splitting}. Thus, determining the KL exponent of an objective function holds the key to assessing convergence rates near critical points. Note that for most prominent optimization problems, determining the KL exponent is an open problem. Nevertheless many important examples have been determined recently, such as least-squares and logistic regression with an $\ell_1$, $\ell_0$, or SCAD penalty \cite{li2016calculus}. A very interesting recent work showed that for convex functions the KL property is equivalent to an error bound condition which is often easier to check in practice \cite{bolte2015error}. %We expect our understanding of the KL exponent to grow in the future.

%While determining KL exponents is a difficult problem, some progress has been made recently, which will hopefully continue in the future \cite{li2016calculus,bolte2015error}. 

%A very rich subclass of KL functions are semialgebraic functions \cite[Sec. 2.2]{attouch2013convergence}. This covers for example functions of the form $\|G(x)\|_p^q$ for $p,q\geq 0$, where $G:\re^n\to\re^m$ is any polynomial function. Notice that the $\ell_0$-pseudo norm is included. Note the rank function of a matrix is semialgebraic \cite[Example 1]{attouch2010proximal}. Semialgebraic functions possess a comprehensive calculus, which includes closure under addition and multiplication \cite[Sec. 4.3]{attouch2010proximal}. %Thus the standard optimization formulations relating to compressed sensing and matrix completion are semialgebraic \cite{}. 

We now precisely state our assumptions on Problem (\ref{prb1}), which will be in effect throughout the rest of the paper. 

{\bf Assumption 1.} The function $\Phi:\re^n\to\re\cup\{+\infty\}$ is semialgebraic, bounded from below, and has desingularizing function $\varphi(t)=\frac{c}{\theta}t^\theta$ where $c>0$ and $\theta\in(0,1]$. The function $g:\re^n\to\overline{\re}$ is l.s.c., and $f:\re^n\to\re$ has Lipschitz continuous gradient with constant $L$.

%\begin{enumerate}
%\item $\Phi$ bounded from below, nonempty domain, semialgebraic, $g$ is l.s.c., $f$ is $L$-smooth. 
%\end{enumerate} 
\section{A Family of Inertial Algorithms}
We study the family of inertial algorithms proposed in \cite{liang2016multi}. In what follows $s\geq 1$ is an integer, and $I=\{0,1,\ldots,s-1\}$.

 \begin{algorithm}
  \caption{Multi-step Inertial Forward-Backward splitting (MiFB)}
  \begin{algorithmic}
  \REQUIRE $x_0\in\re^n$, $0<\underline{\gamma}\leq\overline{\gamma}<1/L$. 
  \STATE Set $x_{-s}=\ldots=x_{-1}=x_0$, $k=1$
  \REPEAT
   \STATE Choose $0<\underline{\gamma}\leq\gamma_k\leq\overline{\gamma}<1/L$, $\{a_{k,0}, a_{k,1},\ldots\} \in(-1,1]^s$, $\{b_{k,0}, b_{k,1},\ldots\} \in(-1,1]^s$.
   \STATE $y_{a,k} = x_k +\sum_{i\in I}a_{k,i}(x_{k-i}-x_{k-i-1})$
   \STATE $y_{b,k} = x_k +\sum_{i\in I}b_{k,i}(x_{k-i}-x_{k-i-1})$
   \STATE $x_{k+1}\in\prox_{\gamma_k g}\left(y_{a,k}-\gamma_k\nabla f(y_{b,k})\right)$
   \STATE  $k=k+1$
    \UNTIL{convergence}
  \end{algorithmic}
  \end{algorithm}
  Note the algorithm as stated leaves open the choice of the parameters $\{a_{k,i},b_{k,i},\gamma_k\}$. For convergence conditions on the parameters we refer to Section \ref{secConv}  and \cite[Thm. 1]{Johnstone2016Local}.
  
   The algorithm is very general and covers several inertial algorithms proposed in the literature as special cases. For instance the inertial forward-backward method proposed in \cite{boct2016inertial} corresponds to MiFB with $s=1$, and $b_{k,0}=0$. The well-known iPiano algorithm also corresponds to this same parameter choice, however the original analysis of this algorithm assumed $g$ was convex \cite{ochs2014ipiano}. The \emph{heavy-ball method} is an early and prominent inertial first-order method which also corresponds to this parameter choice when $g=0$. The heavy-ball method was originally proposed for strongly convex quadratic problems but was considered in the context of nonconvex problems in \cite{zavriev1993heavy}. The analysis of \cite{xu2013block} applies to MiFB for the special case when $s=1$ and $a_{k,0}=b_{k,0}$. However \cite{xu2013block} only derived convergence rates of the iterates and not the function values, which are our main interest\footnote{Note that the objective function is not Lipschitz continuous so rates derived for the iterates do not immediately imply rates for the objective.}. Furthermore \cite{xu2013block} used a different proof technique to the one used here. This same parameter choice has been considered for convex optimization in \cite{Johnstone2016Local,lorenz2014inertial}, albeit without the sharp convergence rates derived here. In both the convex and nonconvex settings, employing inertia has been found to improve either the convergence rate or the quality of the obtained local minimum in several studies \cite{boct2016inertial,ochs2014ipiano,liang2016multi,Johnstone2016Local}. 
   %We mention that it was noticed in \cite{liang2016multi} that using multiple inertial terms (i.e. $s>1$) can lead to improved performance. 
   
   General convergence rates have not been derived for MiFB under nonconvexity and semialgebraicity assumptions. The convergence rate of iPiano has been examined in a limited situation where the KL exponent $\theta=1/2$ in \cite[Thm 5.2]{li2016calculus}. Note that the primary motivation for studying this framework is its generality - allowing our analysis to cover many special cases from the literature. However the case $s=1$ is the most interesting in practice and corresponds to the most prominent inertial algorithms. 
   
\section{Convergence Rate Analysis}\label{secConv}
Throughout the analysis, Assumption 1 is in effect.  
Before providing our convergence rate analysis, we need a few results from \cite{liang2016multi}.  
\begin{thm}\label{liangsThm}
Fix $s\geq 1$ and recall $I=\{0,1,\ldots, s-1\}$. Fix $\{\gamma_k\}$, $\{a_{k,i}\}$ and $\{b_{k,i}\}$ for $k\in\mathbb{N}$ and $i\in I$. Fix $\mu,\nu>0$ and define
\begin{eqnarray*}
\beta_k &\triangleq& \frac{1-\gamma_k L - \mu - \nu\gamma_k}{2\gamma_k},\quad \underline{\beta}\triangleq\underset{k\in\mathbb{N}}{\lim\inf}\,\beta_k,\label{para1}
\\\label{para2}
\alpha_{k,i}&\triangleq&
\frac{s a_{k,i}^2}{2\gamma_k\mu}+\frac{s b_{k,i}^2 L^2}{2\nu},\quad \overline{\alpha}_i\triangleq\underset{k\in\mathbb{N}}{\lim\sup}\,\alpha_{k,i},
\end{eqnarray*}
and $z_k \triangleq (x_k^\top,x_{k-1}^\top,\ldots,x_{k-s}^\top)^\top$. Define the multi-step Lyapunov function as 
\begin{eqnarray}
\Psi(z_k)\triangleq \Phi(x_k)+\sum_{i\in I}\left(\sum_{j=i}^{s-1}\overline{\alpha}_j\right)\Delta_{k-i}^2.
\label{defLyap}
\end{eqnarray}
and
\begin{eqnarray}
\delta\triangleq \underline{\beta}-\sum_{i\in I}\overline{\alpha}_i>0.\label{deltaDef}
\end{eqnarray}
If the parameters are chosen so that $\delta>0$ 
%to satisfy \cite[eq. (2.2)--(2.3)]{liang2016multi}. Then 
then
\begin{enumerate}[(i)]
\item for all $k$, $\Psi(z_{k+1})\leq\Psi(z_k) - \delta\Delta_{k+1}^2$,
%\begin{eqnarray}\label{descent}
%
%\end{eqnarray}
\item for all $k$, there is a $\sigma>0$ such that $d(0,\partial\Psi(z_k))
\leq
\sigma\sum_{j=k+1-s}^{k}\Delta_j$,
%\begin{eqnarray}\label{subgradientIneq}
%d(0,\partial\Psi(z_k))
%\leq
%\sigma\sum_{j=k+1-s}^{k}\Delta_j,
%\end{eqnarray}
\item If $\{x_k\}$ is bounded there exists $x^*\in\text{crit}(\Phi)$ such that $x_k\to x^*$ and $\Phi(x_k)\to\Phi(x^*)$.
\end{enumerate}
\end{thm}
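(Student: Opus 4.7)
The plan is to address the three parts in sequence. For part (i), I would start from the optimality of the proximal step. Since $x_{k+1}\in\prox_{\gamma_k g}(y_{a,k}-\gamma_k\nabla f(y_{b,k}))$, comparing the proximal objective at $x_{k+1}$ against $x_k$ yields
\begin{equation*}
g(x_{k+1})-g(x_k)\leq \tfrac{1}{2\gamma_k}\bigl(\|x_k-y_{a,k}\|^2-\|x_{k+1}-y_{a,k}\|^2\bigr)+\langle\nabla f(y_{b,k}),x_k-x_{k+1}\rangle.
\end{equation*}
The descent lemma for $f$ (applied around $y_{b,k}$) gives $f(x_{k+1})-f(x_k)\leq \langle\nabla f(y_{b,k}),x_{k+1}-x_k\rangle+\tfrac{L}{2}\|x_{k+1}-y_{b,k}\|^2+\tfrac{L}{2}\|x_k-y_{b,k}\|^2$ (after combining with $f(y_{b,k})-f(x_k)\leq \langle\nabla f(y_{b,k}),y_{b,k}-x_k\rangle+\tfrac{L}{2}\|x_k-y_{b,k}\|^2$, using the symmetric form of the descent inequality). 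Adding these and expanding $\|x_{k+1}-y_{a,k}\|^2=\|x_{k+1}-x_k\|^2-2\langle x_{k+1}-x_k,y_{a,k}-x_k\rangle+\|y_{a,k}-x_k\|^2$ produces cross terms in $\Delta_{k-i}$ which I control with Young's inequality using parameters $\mu,\nu>0$; this is precisely where the constants $\alpha_{k,i}$ and $\beta_k$ emerge. Absorbing the $\Delta_{k-i}^2$ remainders into the telescoping sum in the definition (\ref{defLyap}) of $\Psi$ gives $\Psi(z_{k+1})\leq\Psi(z_k)-\delta\Delta_{k+1}^2$ with $\delta$ as in (\ref{deltaDef}).

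For part (ii), I would read off a subgradient directly. The prox optimality yields
\begin{equation*}
v_{k+1}\triangleq\tfrac{1}{\gamma_k}(y_{a,k}-x_{k+1})+\nabla f(x_{k+1})-\nabla f(y_{b,k})\in\partial\Phi(x_{k+1}),
\end{equation*}
whose norm, using $y_{a,k}-x_k=\sum_i a_{k,i}(x_{k-i}-x_{k-i-1})$, $y_{b,k}-x_k=\sum_i b_{k,i}(x_{k-i}-x_{k-i-1})$, and the triangle inequality together with $L$-smoothness, is bounded by a linear combination of $\Delta_{k+1}$ and $\{\Delta_{k-i}\}_{i\in I}$. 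The partial derivatives of the extra quadratic terms $\Delta_{k-i}^2$ in $\Psi$ with respect to the block coordinates of $z_k$ are each linear in $\Delta_{k-i}$, so assembling one component of $\partial\Psi(z_k)$ in the $x_k$-block as $v_k$ plus these linear terms yields an element whose norm is at most $\sigma\sum_{j=k+1-s}^{k}\Delta_j$ for a uniform constant $\sigma$ depending on $\underline\gamma,L$, and the $\overline\alpha_i$.

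For part (iii), part (i) and the lower boundedness of $\Phi$ (hence of $\Psi$) imply $\Psi(z_k)$ is monotone and convergent to some $\Psi^*$, and $\sum_k\Delta_k^2<\infty$. This is not yet summability of $\{\Delta_k\}$, and this is the main obstacle; to close the gap I invoke the KL argument on $\Psi$. Because $\Phi$ is semialgebraic and $\Psi$ is a sum of $\Phi$ composed with a linear projection and polynomial terms, $\Psi$ is semialgebraic, hence KL at any cluster point $z^*$ of $\{z_k\}$ (bounded iterates give nonempty cluster set). Using concavity of the desingularizing function $\varphi$,
\begin{equation*}
\varphi(\Psi(z_k)-\Psi^*)-\varphi(\Psi(z_{k+1})-\Psi^*)\geq \varphi'(\Psi(z_k)-\Psi^*)\bigl(\Psi(z_k)-\Psi(z_{k+1})\bigr),
\end{equation*}
combined with the KL inequality $\varphi'(\Psi(z_k)-\Psi^*)\,d(0,\partial\Psi(z_k))\geq 1$, part (i), and the subgradient bound from part (ii), gives in standard fashion an inequality of the form $\Delta_{k+1}^2\leq C(\varphi_k-\varphi_{k+1})\sum_{j=k+1-s}^{k}\Delta_j$. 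Applying $2\sqrt{ab}\leq a+b$ and telescoping $\varphi_k$ then yields $\sum\Delta_k<\infty$, so $\{x_k\}$ is Cauchy and converges to some $x^*$. Finally, $\Delta_{k+1}\to 0$ implies $y_{a,k},y_{b,k}\to x^*$, so the subgradient $v_{k+1}\in\partial\Phi(x_{k+1})$ tends to $0$; the closedness of $\partial\Phi$ yields $0\in\partial\Phi(x^*)$, and $\Phi(x_k)\to\Phi(x^*)$ follows from continuity of $f$ and the lower semicontinuity plus the prox comparison argument standard in this setting.
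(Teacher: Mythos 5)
The paper does not prove this theorem itself: all four claims are imported by citation from \cite{liang2016multi} (Lemma~A.5, Fact~(R.2), Lemma~A.6, and Theorem~2.2 there), and your sketch reconstructs exactly those arguments---prox optimality combined with the descent lemma and Young's inequality with parameters $\mu,\nu$ for (i), the prox first-order condition plus $L$-Lipschitz continuity of $\nabla f$ and the gradients of the quadratic coupling terms for (ii), and the standard KL/desingularization telescoping to upgrade $\sum_k\Delta_k^2<\infty$ to $\sum_k\Delta_k<\infty$ for (iii)---so it is essentially the same approach as the source the paper relies on. The one step your outline glosses over in (iii) is that the KL inequality for $\Psi$ holds only on a neighborhood of a cluster point and for values in a window above $\Psi(z^*)$, so before telescoping one needs the uniformized KL property on the compact set of cluster points together with an induction keeping $z_k$ in that neighborhood; this is a standard (and repairable) technicality handled in \cite{liang2016multi}, not a flaw in the strategy.
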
 
\begin{proof}
Statements (i) and (ii) are shown in \cite[Lemma A.5]{liang2016multi} and \cite[Fact (R.2)]{liang2016multi} respectively. The fact that $\Phi(x_k)\to\Phi(x^*)$ is shown in \cite[Lemma A.6]{liang2016multi}. The fact that $x_k\to x^*$ is the main result of \cite[Thm 2.2]{liang2016multi}. 
\end{proof}
The assumption that $\{x_k\}$ is bounded is standard in the analysis of algorithms for nonconvex optimization and is guaranteed under ordinary conditions such as coercivity. Since the set of semialgebraic functions is closed under addition, $\Psi$ is semialgebraic \cite{bolte2014proximal}. 
We now give our convergence result. 
\begin{thm}
Assume the parameters of MiFB are chosen such that $\delta>0$ where $\delta$ is defined in (\ref{deltaDef}), thus there exists a critical point $x^*$ such that $x_k\to x^*$. Let $\theta$ be the KL exponent of $\Psi$ defined in (\ref{defLyap}). 
\begin{enumerate}[(a)]
\item If $\theta=1$, then $x_k$ converges to $x^*$ in a finite number of iterations. 
\item If $\frac{1}{2}\leq\theta<1$, then $\Phi(x_k)\to \Phi(x^*)$ linearly. 
\item If $0<\theta<1/2$, then $\Phi(x_k)-\Phi(x^*) = O\left(k^{\frac{1}{2\theta - 1}}\right)$.
\end{enumerate}\label{ThmOurRate}
\end{thm}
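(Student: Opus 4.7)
The plan is to apply the KL inequality to the multi-step Lyapunov potential $\Psi$ and combine the resulting estimate with the descent and relative-error bounds of Theorem \ref{liangsThm}. Summing the descent inequality of Theorem \ref{liangsThm}(i) against the fact that $\Phi$ is bounded below gives $\sum_k \Delta_k^2<\infty$, hence $\Delta_k\to 0$; together with Theorem \ref{liangsThm}(iii) this implies $z_k\to z^*\triangleq(x^*,\ldots,x^*)$ and $\Psi(z_k)\to \Phi(x^*)$ monotonically. Setting $r_k\triangleq\Psi(z_k)-\Phi(x^*)\geq 0$, the nonnegativity of the inertial terms in \eqref{defLyap} yields $\Psi(z_k)\geq\Phi(x_k)$, so upper bounds on $r_k$ automatically produce upper bounds on $\Phi(x_k)-\Phi(x^*)$.

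Next I would derive a single scalar recursion in $r_k$. For $k$ large enough, $z_k$ lies in the KL neighborhood of $z^*$ and the KL inequality with $\varphi(t)=(c/\theta)t^\theta$ gives $d(0,\partial\Psi(z_k))\geq c^{-1}r_k^{1-\theta}$. Combining this with the relative-error estimate in Theorem \ref{liangsThm}(ii) and the Cauchy--Schwarz bound $\bigl(\sum_{j=k-s+1}^{k}\Delta_j\bigr)^2 \leq s\sum_{j=k-s+1}^{k}\Delta_j^2$, then telescoping the sufficient decrease $\Delta_j^2\leq\delta^{-1}(r_{j-1}-r_j)$ over a window of $s$ consecutive steps, yields the master inequality
\begin{equation*}
r_{k-s}-r_k \;\geq\; \frac{\delta}{c^2\sigma^2 s}\, r_k^{2(1-\theta)}.
\end{equation*}

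Each of the three rate regimes then follows from standard manipulations of this recursion. For $\theta=1$ the right-hand side is a positive constant whenever $r_k>0$, contradicting $r_k\to 0$ unless $r_K=0$ at some finite $K$; Theorem \ref{liangsThm}(i) then forces $\Delta_k=0$ for all $k>K$, so $x_k=x^*$ eventually. For $\theta\in[1/2,1)$ we have $2(1-\theta)\leq 1$, so for $k$ large enough ($r_k\leq 1$) we get $r_k^{2(1-\theta)}\geq r_k$, and the master inequality reduces to $r_k\leq(1+\delta/(c^2\sigma^2 s))^{-1}r_{k-s}$, giving linear convergence of $r_k$ in windows of length $s$ and hence in $k$. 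For $\theta\in(0,1/2)$, set $\rho\triangleq 2(1-\theta)>1$ and $u_n\triangleq r_{ns}$; the master inequality becomes $u_{n-1}-u_n\geq C u_n^{\rho}$, and the classical \L ojasiewicz sequence lemma (as used in \cite{attouch2009convergence,frankel2014splitting}) yields $u_n=O(n^{-1/(\rho-1)})$, which by monotonicity of $r_k$ translates to $r_k=O(k^{1/(2\theta-1)})$.

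The main obstacle, and the reason the analysis of \cite{frankel2014splitting} for noninertial methods does not transfer directly, is the mismatch between the $\ell_1$-type window sum $\sum_{j=k-s+1}^{k}\Delta_j$ appearing in the relative-error bound Theorem \ref{liangsThm}(ii) and the single $\Delta_{k+1}^2$ term gained per iteration in the descent bound Theorem \ref{liangsThm}(i). The Cauchy--Schwarz step and the $s$-step telescoping above are precisely what bridge this gap, after which the case analysis parallels the noninertial setting.
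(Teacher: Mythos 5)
Your proposal is correct and follows essentially the same route as the paper: the same master recursion $r_{k-s}-r_k\geq C_1 r_k^{2(1-\theta)}$ obtained by squaring the KL inequality, inserting the relative-error bound, applying Cauchy--Schwarz, and telescoping the descent over an $s$-step window, followed by the same three-case analysis. The only cosmetic difference is in case (c), where you invoke the standard \L ojasiewicz sequence lemma for $u_{n-1}-u_n\geq Cu_n^{\rho}$ applied to $u_n=r_{ns}$, whereas the paper reproves that lemma explicitly via the auxiliary function $\phi(t)=\frac{D}{1-2\theta}t^{2\theta-1}$ and a two-case comparison; both yield the same $O(k^{1/(2\theta-1)})$ bound.
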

\vspace{-0.5cm}
\begin{proof} 
The starting point is the KL inequality applied to the multi-step Lyapunov function defined in (\ref{defLyap}). Let $z^*\triangleq((x^*)^\top,\ldots,(x^*)^\top)^\top$. Suppose $\Psi(z_K)=\Psi(z^*)$ for some $K>0$. Then the descent property of Thm.~1(i), along with the fact that $\Psi(z_k)\to\Psi(z^*)$, implies that $\Delta_{K+1}=0$ and therefore $\Psi(z_k)=\Psi(z^*)$ holds for all $k>K$. Therefore assume $\Psi(z_k)>\Psi(z^*)$. Now since $z_k\to z^*$ and $\Psi(z_k)\to\Psi(z^*)$, there exists $k_0>0$ such that for $k>k_0$ (\ref{definitiveKL}) holds with $f=\Psi$. Assume $k>k_0$. Squaring both sides of (\ref{definitiveKL}) yields
\begin{eqnarray}\label{KLeq}
\varphi'^2(\Psi(z_k)-\Psi(z^*))d(0,\partial\Psi(z_k))^2\geq 1,
\end{eqnarray}
Now substituting Thm.1~(ii) into (\ref{KLeq}) yields
\begin{eqnarray}
\sigma^2\varphi'^2(\Psi(z_k)-\Psi(z^*))
\left(\sum_{j=k+1-s}^{k}\Delta_j\right)^2
\geq 1.\label{KLeq2}
\end{eqnarray}
Now  
\begin{eqnarray}\nonumber
\left(\sum_{j=k+1-s}^{k}\Delta_j\right)^2
&\leq&
s\sum_{j=k+1-s}^{k}\Delta_j^2
\\
&\leq &
\frac{s}{\delta}\sum_{j=k+1-s}^{k}
\left(
\Psi(z_{j-1})-\Psi(z_j)
\right)\nonumber
\\
&=&
\frac{s}{\delta}
\left(
\Psi(z_{k-s}) - \Psi(z_k)
\right),\nonumber
\end{eqnarray}
where in the first inequality we have used the fact that $(\sum_{i=1}^s a_i)^2\leq s\sum_{i=1}^n a_i^2$, and in the second inequality we have used Thm.~1(i). Substituting this into (\ref{KLeq2}) yields
\begin{eqnarray*}
\frac{\sigma^2 s}{\delta}
\varphi'^2(\Psi(z_k)-\Psi(z^*))
\left(
\Psi(z_{k-s}) - \Psi(z_k)
\right)
\geq 1,\label{so}
\end{eqnarray*}
from which convergence rates can be derived by extending the arguments in \cite[Thm 4]{frankel2014splitting}. 

Proceeding, let $r_k \triangleq \Psi(z_k)-\Psi(z^*)$, and  $C_1\triangleq \frac{\delta}{\sigma^2 c^2 s}$, then using $\varphi'(t) = c t^{\theta - 1}$, we get
\begin{eqnarray}\label{mainRecursion}
r_{k-s}-r_k\geq C_1r_k^{2(1-\theta)}.
\end{eqnarray}
If $\theta=1$, then the recursion becomes
$
r_{k-s}-r_k\geq C_1,\quad \forall k>k_0.
$
Since by Theorem \ref{liangsThm} (iii), $r_k$ converges, this would require $C_1=0$, which is a contradiction. Therefore there exists $k_1$ such that $r_k=0$ for all $k>k_1$. 

Suppose $\theta\geq  1/2$, then since $r_k\to 0$, there exists $k_2$ such that for all $k>k_2$, $r_k\leq 1$, and $r_k^{2(1-\theta)}\geq r_k$. Therefore for all $k>k_2$,
\begin{eqnarray}
r_{k-s} - r_k \geq C_1 r_k
\implies
r_k
&\leq& (1+C_1)^{-1}r_{k-s}
\nonumber\\\label{eq_linear}
&\leq&
(1+C_1)^{-p_1}r_{k_2},
\end{eqnarray}
where $p_1\triangleq\lfloor\frac{k-k_2}{s}\rfloor$ Note that $p_1> \frac{k-k_2-s}{s}$.
Therefore $r_k\to 0$ linearly. Note that if $\theta=\frac{1}{2}$, $2(1-\theta)=1$ and (\ref{eq_linear}) holds for all $k\geq k_0$. 

Finally suppose $\theta< 1/2$. 
Define $\phi(t) \triangleq \frac{D}{1-2\theta}t^{2\theta - 1}$ where $D>0$, so $\phi'(t) = -D t^{2\theta - 2}$. Now
\begin{eqnarray*}
\phi(r_{k})-\phi(r_{k-s}) = \int_{r_{k-s}}^{r_k}\phi'(t)dt
&=&
D\int^{r_{k-s}}_{r_k}
t^{2\theta - 2}dt.
\end{eqnarray*}
Therefore since $r_{k-s}\geq r_k$ and $t^{2\theta-2}$ is nonincreasing, 
\begin{eqnarray*}
\phi(r_{k})-\phi(r_{k-s})\geq D(r_{k-s}-r_k)r_{k-s}^{2\theta - 2}.
\end{eqnarray*}
Now we consider two cases. 

{\bf Case 1:} suppose $2 r_{k-s}^{2\theta - 2}\geq r_k^{2\theta - 2}$, then 
\begin{eqnarray}
\phi(r_k) -\phi(r_{k-s})
\geq 
\frac{D}{2}
(r_{k-s}-r_k)
r_k^{2\theta - 2}
\geq 
\frac{C_1D}{2}.
\label{lowerForPhi1}
\end{eqnarray} 
where in the second inequality we have used (\ref{mainRecursion}). 

{\bf Case 2:} suppose that $2 r_{k-s}^{2\theta - 2}< r_k^{2\theta - 2}$. Now $2\theta - 2<2\theta - 1 < 0$, therefore $(2\theta - 1)/(2\theta - 2)>0$, thus
$
r_k^{2\theta - 1}> q r_{k-s}^{2\theta-1}
$
where $q = 2^{\frac{2\theta - 1}{2\theta - 2}}>1$. Thus
\begin{eqnarray} 
\phi(r_k) - \phi(r_{k-s})
&=&
\frac{D}{1-2\theta}\left(
r_k^{2\theta - 1} - r_{k-s}^{2\theta - 1}
\right)
\nonumber\\
&>&
\frac{D}{1-2\theta}
(q - 1)r_{k-s}^{2\theta - 1}
\nonumber\\
&\geq&
\frac{D}{1-2\theta}
(q - 1)r_{k_0}^{2\theta - 1} \triangleq C_2\label{lowerForPhi2}.
\end{eqnarray}
Thus putting together (\ref{lowerForPhi1}) and (\ref{lowerForPhi2}) yields
$
\phi(r_k)\geq \phi(r_{k-s})+C_3
$
where $C_3 = \max(C_2,\frac{C_1D}{2})$. 
Therefore
\begin{eqnarray*}
\phi(r_k)&\geq& \phi(r_k)-\phi(r_{k-p_2 s})\geq
p_2C_3
\end{eqnarray*}
where $p_2\triangleq\lfloor\frac{k-k_0}{s}\rfloor$.
Therefore 
\begin{eqnarray*}
r_k \leq \left(\frac{1-2\theta}{D}\right)^{\frac{1}{2\theta - 1}}{(p_2C_3)}^{\frac{1}{2\theta - 1}}
\leq 
C_4 \left(\frac{k-s-k_0}{s}\right)^{\frac{1}{2\theta-1}}.
\end{eqnarray*} 
where $C_4 = \left(\frac{C_3(1-2\theta)}{D}\right)^{\frac{1}{2\theta - 1}}$.
To end the proof, note that $\Phi(x_k)\leq \Psi(z_k)$. 
\end{proof}
%REMOVE? We mention that even though Theorem \ref{ThmOurRate} shows that the convergence rate of inertial splitting is the same (up to constants) as the rates derived in \cite{frankel2014splitting} for noninertial methods, this does not mean they have the same overall performance. In fact, one of the main advantages of inertial methods in nonconvex optimization is the ability to escape ``bad" local minima and saddle points in a principled way \cite[Sec 4.1]{boct2016inertial},\cite{sutskever2013importance}.
In the case where $f$ and $g$ are also convex, we can use parameter choices specified in \cite[Thm. 1]{Johnstone2016Local}.

\section{Convergence Rates of the Iterates}
The convergence rates of $\|x_k - x^*\|$ can also be quantified. To do so we need another result from \cite{liang2016multi}.
\begin{lemma}
 Recall the notation $r_k\triangleq \Psi(z_k)-\Psi(z^*)$. Let $\epsilon_k\triangleq\frac{\sigma}{\delta}\left(\varphi(r_k)-\varphi(r_{k+1})\right)$ where $\sigma$ is defined in Theorem \ref{liangsThm} (ii) and $\delta$ in (\ref{deltaDef}). Fix $\kappa>0$ so that $\kappa<2/s$.  
Assume the parameters of MiFB are chosen to so that $\delta>0$ and $\{x_k\}$ is bounded. Then there exists a $k_0>0$ such that for all $k>k_0$
\begin{eqnarray}
r_k>0\implies \Delta_k\leq\frac{\kappa}{2}\sum_{j=k-s}^{k-1}\Delta_j + \frac{1}{2\kappa}\epsilon_{k-1}.\label{iterate}
\end{eqnarray}
\end{lemma}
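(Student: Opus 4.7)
The plan is to combine the three standard ingredients of KL-based analysis — the KL inequality applied to $\Psi$, the descent property of Theorem~\ref{liangsThm}(i), and the subgradient bound of Theorem~\ref{liangsThm}(ii) — to obtain a product-type estimate on $\Delta_k^2$, and then convert it into the additive bound (\ref{iterate}) by Young's inequality. The whole argument is the multi-step analogue of the classical ``finite-length'' estimate used for proximal gradient methods.

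First, since $z_k\to z^*$ and $\Psi(z_k)\to \Psi(z^*)$, there exists $k_0$ such that for $k>k_0$ the point $z_{k-1}$ lies in the KL neighborhood $U$ of $z^*$. Under the hypothesis $r_k>0$, the descent in Theorem~\ref{liangsThm}(i) gives $r_{k-1}\geq r_k>0$, so $\Psi(z_{k-1})\in(\Psi(z^*),\Psi(z^*)+\eta)$ (for $k_0$ large enough) and (\ref{definitiveKL}) applied to $\Psi$ at $z_{k-1}$ is legitimate: $\varphi'(r_{k-1})\,d(0,\partial\Psi(z_{k-1}))\geq 1$. Invoking Theorem~\ref{liangsThm}(ii) at $z_{k-1}$ turns this into the lower bound $\varphi'(r_{k-1})\geq \bigl(\sigma\sum_{j=k-s}^{k-1}\Delta_j\bigr)^{-1}$.

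Next, I would use concavity of $\varphi$: since $r_{k-1}\geq r_k$, $\varphi(r_{k-1})-\varphi(r_k)\geq \varphi'(r_{k-1})(r_{k-1}-r_k)$. Applying Theorem~\ref{liangsThm}(i) in the form $r_{k-1}-r_k\geq \delta \Delta_k^2$ and substituting the lower bound on $\varphi'(r_{k-1})$ yields
\begin{eqnarray*}
\varphi(r_{k-1})-\varphi(r_k)\;\geq\; \frac{\delta\,\Delta_k^2}{\sigma\sum_{j=k-s}^{k-1}\Delta_j},
\end{eqnarray*}
which, after multiplying through by $\sigma/\delta$ and recognizing $\epsilon_{k-1}$, rearranges to the key product estimate $\Delta_k^2 \leq \bigl(\sum_{j=k-s}^{k-1}\Delta_j\bigr)\epsilon_{k-1}$. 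Finally I would take square roots and apply Young's inequality $\sqrt{xy}\leq \tfrac{\kappa}{2}x+\tfrac{1}{2\kappa}y$ with $x=\sum_{j=k-s}^{k-1}\Delta_j$ and $y=\epsilon_{k-1}$ to obtain (\ref{iterate}).

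There is no serious obstacle here: the proof is essentially bookkeeping that lifts the one-step argument of \cite{frankel2014splitting} to the multi-step Lyapunov setting of \cite{liang2016multi}. The one point requiring care is ensuring the concavity step is not vacuous, i.e. that $\varphi'(r_{k-1})$ is finite; this is automatic since $r_{k-1}\geq r_k>0$ places $r_{k-1}$ in the open interval $(0,\eta)$ on which $\varphi$ is $C^1$. Note also that the constraint $\kappa<2/s$ in the hypothesis plays no role in the derivation above — any $\kappa>0$ would do — so it must be there for a downstream use of (\ref{iterate}) as a recursion, where the effective contraction factor $\kappa s/2$ must be strictly less than one to sum the resulting geometric series.
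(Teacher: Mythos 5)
Your derivation is correct and complete: the chain KL inequality at $z_{k-1}$ $\rightarrow$ subgradient bound of Theorem~\ref{liangsThm}(ii) $\rightarrow$ concavity of $\varphi$ combined with the descent estimate $r_{k-1}-r_k\geq\delta\Delta_k^2$ $\rightarrow$ Young's inequality is exactly the standard finite-length argument, correctly lifted to the multi-step window $\sum_{j=k-s}^{k-1}\Delta_j$, and your side remarks (why $\varphi'(r_{k-1})$ is well defined, why $\kappa<2/s$ is only needed downstream) are both accurate. The paper itself offers no argument here --- it simply cites page 14 of \cite{liang2016multi} --- so your self-contained proof is a faithful reconstruction of what that reference does rather than a genuinely different route; the only pedantic point you leave implicit is that the KL inequality forces $d(0,\partial\Psi(z_{k-1}))>0$, hence $\sum_{j=k-s}^{k-1}\Delta_j>0$, so the division in your product estimate is legitimate.
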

\begin{proof}
This inequality is proved on page 14 of \cite{liang2016multi} as part of the proof of \cite[Thm 2.2]{liang2016multi}.
\end{proof}
We now state our result. 
\begin{thm}
Assume $\{x_k\}$ is bounded and the parameters of MiFB are chosen so that $\delta>0$ where $\delta$ is defined in (\ref{deltaDef}). Let $\theta$ be the KL exponent of $\Psi$ defined in (\ref{defLyap}). Then 
\begin{enumerate}[(a)]
\item If $\theta=1$, then $x_k=x^*$ after finitely many iterations.
\item If $\frac{1}{2}\leq \theta<1$, $x_k\to x^*$ linearly. 
\item If $0<\theta<\frac{1}{2}$, $\|x_k-x^*\| = O\left(k^{\frac{\theta}{2\theta-1}}\right)$. 
\end{enumerate}\label{ThmIterates}
\end{thm}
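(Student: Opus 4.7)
The strategy is to bound $\|x_k - x^*\|$ by the tail sum $\sum_{j \ge k+1}\Delta_j$, which is valid by the triangle inequality since Theorem~\ref{liangsThm}(iii) gives $x_k \to x^*$. I will then sum the one-step bound (\ref{iterate}) from the Lemma over $k \in [N,M]$ and push $M \to \infty$. Each $\Delta_j$ with $j \in [N-s, M-1]$ appears at most $s$ times in the double sum $\sum_{k=N}^{M}\sum_{j=k-s}^{k-1}\Delta_j$, so that sum is bounded by $s\sum_{j=N-s}^{M-1}\Delta_j$. Splitting this block at $j=N$, moving the piece $\sum_{j=N}^{M-1}\Delta_j$ to the left side, and choosing $\kappa = 1/s$ (admissible since then $\kappa < 2/s$, and $\kappa s/2 = 1/2$), I obtain after sending $M \to \infty$ a clean tail estimate
\begin{equation*}
\sum_{k\ge N}\Delta_k \;\le\; \sum_{j=N-s}^{N-1}\Delta_j \;+\; s\sum_{k\ge N-1}\epsilon_k.
\end{equation*}

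The next step is to convert the right-hand side into an explicit function of $r_{N-s-1}$. The sum of $\epsilon_k$ telescopes: since $\epsilon_k = \tfrac{\sigma}{\delta}(\varphi(r_k) - \varphi(r_{k+1}))$ and $r_k \to 0$, one has $\sum_{k\ge N-1}\epsilon_k = \tfrac{\sigma c}{\delta\theta}r_{N-1}^{\theta}$. For the finite block, Cauchy-Schwarz and the descent inequality of Theorem~\ref{liangsThm}(i), rearranged as $\Delta_j^2 \le (r_{j-1}-r_j)/\delta$, give
\begin{equation*}
\sum_{j=N-s}^{N-1}\Delta_j \;\le\; \sqrt{s}\Bigl(\sum_{j=N-s}^{N-1}\Delta_j^2\Bigr)^{1/2} \;\le\; \sqrt{s/\delta}\; r_{N-s-1}^{1/2}.
\end{equation*}
Combining these yields the master bound $\|x_N - x^*\| \le C_1 r_{N-s-1}^{1/2} + C_2 r_{N-1}^{\theta}$ for explicit constants $C_1, C_2 > 0$.

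It only remains to insert the rates on $r_k$ from Theorem~\ref{ThmOurRate}. For (a) $\theta=1$, Theorem~\ref{ThmOurRate}(a) gives $r_k = 0$ for all large $k$, whence Theorem~\ref{liangsThm}(i) forces $\Delta_{k+1} = 0$ and $x_k = x^*$ thereafter. (If $r_k = 0$ at some finite stage in the other cases the same argument applies, so we may otherwise assume $r_k > 0$ and use the Lemma.) For (b) $1/2 \le \theta < 1$, Theorem~\ref{ThmOurRate}(b) makes $r_k$ decay linearly, so both $r_k^{1/2}$ and $r_k^{\theta}$ decay linearly, and hence so does $\|x_k - x^*\|$. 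For (c) $0 < \theta < 1/2$, Theorem~\ref{ThmOurRate}(c) gives $r_k = O(k^{1/(2\theta-1)})$, so $r_k^{1/2} = O(k^{1/(2(2\theta-1))})$ and $r_k^{\theta} = O(k^{\theta/(2\theta-1)})$. Both exponents are negative; since $2\theta < 1$, the inequality $1/(2(2\theta-1)) < \theta/(2\theta-1)$ holds, so $r_k^{\theta}$ is the slower (dominating) term, delivering the asserted $O(k^{\theta/(2\theta-1)})$ rate.

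The main technical obstacle I anticipate is the bookkeeping of the multi-step double sum together with the choice of $\kappa$: one must absorb the factor $s$ (coming from $s$ inertial directions) while keeping the coefficient multiplying $\sum_{k\ge N}\Delta_k$ strictly positive after rearrangement. The exponent comparison in case (c) is where the hypothesis $\theta < 1/2$ is actually used and explains the factor-$\theta$ gap between the function-value rate of Theorem~\ref{ThmOurRate} and the iterate rate here.
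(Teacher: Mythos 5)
Your argument is correct and follows essentially the same route as the paper's proof: sum the lemma's one-step bound over the tail, absorb the double sum into $\tfrac{\kappa s}{2}\sum\Delta_k$ and rearrange, telescope the $\epsilon_k$'s to $\tfrac{\sigma}{\delta}\varphi(r_{N-1})$, bound the finite block by $\sqrt{s/\delta}\,r_{N-s-1}^{1/2}$ via Cauchy--Schwarz and the descent inequality, and then read off the three cases from Theorem~\ref{ThmOurRate}. Your only departures are cosmetic (fixing $\kappa=1/s$, and working with a finite sum before letting $M\to\infty$, which slightly tightens the justification for subtracting $C\sum_{k\ge N}\Delta_k$ from both sides).
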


% Then 
%\begin{eqnarray*}
%\|x_k - x^*\| = O\left(\tilde{\varphi}(r_{k-s-1})\right)
%\end{eqnarray*}
%where $\tilde{\varphi}(t)=\max\{\varphi(t),\sqrt{t}\}$. \label{thmIterates}
\begin{proof}
Statement (a) follows trivially from the fact that $r_k=0$ after finitely many iterations, and therefore $\Delta_k=0$. We proceed to prove statements (b) and (c). 
As with Theorem \ref{ThmOurRate} the basic idea is to extend the techniques of \cite{frankel2014splitting} to allow for the inertial nature of the algorithm. 
The starting point is (\ref{iterate}). Fix $K>k_0$. Then
\begin{eqnarray*}
\sum_{k\geq K}\Delta_k
&\leq&
\frac{\kappa}{2}\sum_{k\geq K}\sum_{j=k-s}^{k-1}\Delta_j + \frac{1}{2\kappa}\sum_{k\geq K}\epsilon_{k-1}
\\
&\leq&
\frac{\kappa s}{2}\sum_{k\geq K-s} \Delta_k + \frac{1}{2\kappa}\sum_{k\geq K}\epsilon_{k-1}
\end{eqnarray*}
Let $C=\frac{\kappa s}{2}$ and note that $0<C<1$. Therefore subtracting $C\sum_{k\geq K}\Delta_k$ from both sides yields
\begin{eqnarray*}
\sum_{k\geq K}\Delta_k
\leq 
\frac{1}{1-C}\left(
C\sum_{k=K-s}^{K-1}\Delta_k + \frac{1}{2\kappa}\sum_{k\geq K}\epsilon_{k-1}
\right).
\end{eqnarray*}
Next note that
\begin{eqnarray*}
\sum_{k=K-s}^{K-1}
\Delta_k
&\leq&
\sqrt{\frac{s}{\delta}}
\left(
\Psi(z_{K-s-1})-\Psi(z_{K-1})
\right)^{1/2}
\\
&\leq&
\sqrt{\frac{s}{\delta}}
\sqrt{r_{K-s-1}}
\end{eqnarray*}
Let $C'\triangleq C\sqrt{\frac{s}{\delta}}$ then using $\sum_{k\geq K}\epsilon_{k-1}= \frac{\sigma}{\delta}\varphi(r_{K-1})$,
\begin{eqnarray*}
\sum_{k\geq K}
\Delta_k
&\leq&
\frac{1}{1-C}\left(
C'\sqrt{r_{K-s-1}}
+\frac{\sigma}{\delta}\varphi(r_{K-1})
\right)
\\
&\leq&
\frac{1}{1-C}\left(
C'\sqrt{r_{K-s-1}}
+\frac{\sigma}{\delta}\varphi(r_{K-s-1})
\right),
\end{eqnarray*}
where in the second inequality we used the fact that $r_k$ is nonincreasing and $\varphi$ is a monotonic increasing function.
Thus using the triangle inequality and the fact that $\lim_{k}\|x_k-x^*\|=0$, 
\begin{eqnarray*}
\|x_K-x^*\|
\leq 
\sum_{k\geq K}\Delta_k
\leq
\frac{1}{1-C}\left(
C'\sqrt{r_{K-s-1}}
+\frac{\sigma}{\delta}\varphi(r_{K-s-1})
\right).
\end{eqnarray*}
Hence if $r_k\to 0$ linearly, then so does $\|x_k-x^*\|$, which proves (b). On the other hand if $0<\theta<1/2$, for $k$ sufficiently large we see that $\|x_k-x^*\|=O(\varphi(r_{k-s-1}))$, which proves statement (c). 
\end{proof} 
%Thus we can explicate the convergence rates of the iterates. 
%\begin{corollary}
%Assume the parameters of MiFB are chosen to satisfy (\ref{para1})--(\ref{para3}) and $\{x_k\}$ is bounded.
%\end{corollary}

\section{KL Exponent of the Lyapunov Function}\label{secKLexp}
We now extend the result of \cite[Thm 3.7]{li2016calculus} so that it covers the Lyapunov function defined in (\ref{defLyap}).

\begin{thm}
Let $s\geq 1$ and consider
\begin{eqnarray}\label{ALyap}
\Psi^{(s)}(x_1,x_2,\ldots,x_s)\triangleq\Phi(x_1)+\sum_{i=1}^{s-1}c_i\|x_{i+1}-x_i\|^2.
\end{eqnarray}

If $\Phi$ has KL exponent $\theta\in(0,1/2]$ at $\barx$ then $\Psi^{(s)}$ has KL exponent $\theta$ at $[\barx,\barx,\ldots,\barx]^\top$.
\end{thm}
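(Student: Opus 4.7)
The plan is to verify directly that the KL inequality of exponent $\theta$ holds for $\Psi^{(s)}$ at $z^\star := (\bar x,\ldots,\bar x)$, leveraging the KL property of $\Phi$ at $\bar x$. Writing $\Psi^{(s)}(x_1,\ldots,x_s) = \Phi(x_1) + Q(x)$ with the coupling term $Q$ a smooth quadratic, the calculus rules for the limiting subdifferential give that every $\xi = (\xi_1,\ldots,\xi_s) \in \partial \Psi^{(s)}(x)$ has $\xi_1 = v - 2c_1(x_2-x_1)$ for some $v \in \partial \Phi(x_1)$, $\xi_i = 2c_{i-1}(x_i-x_{i-1}) - 2c_i(x_{i+1}-x_i)$ for $1<i<s$, and $\xi_s = 2c_{s-1}(x_s-x_{s-1})$.

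The first real step is a cascading bound. Pick $\xi$ to be the minimum-norm element of $\partial \Psi^{(s)}(x)$ and set $d := \|\xi\| = d(0,\partial \Psi^{(s)}(x))$. The last equation yields $\|x_s - x_{s-1}\| \leq d/(2c_{s-1})$; solving the middle equations iteratively from $i = s-1$ down to $i = 2$ with the triangle inequality and $\|\xi_i\| \leq d$ gives, by a short finite induction of length $s$, constants $K_i > 0$ depending only on the $c_j$ such that $\|x_{i+1}-x_i\| \leq K_i\, d$ for all $i = 1,\ldots,s-1$. Substituting into the equation for $\xi_1$ and isolating $v \in \partial \Phi(x_1)$ then gives $d(0,\partial \Phi(x_1)) \leq \|v\| \leq C_\star\, d$ for a constant $C_\star$ depending only on $c_1$ and $K_1$.

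Now let $r := \Psi^{(s)}(x) - \Psi^{(s)}(z^\star) = (\Phi(x_1)-\Phi(\bar x)) + \sum_{i=1}^{s-1} c_i \|x_{i+1}-x_i\|^2$, and restrict $x$ to a neighborhood of $z^\star$ small enough that $x_1$ lies in the KL neighborhood of $\bar x$ for $\Phi$ and $d \leq 1$. When $\Phi(x_1) > \Phi(\bar x)$ the KL inequality for $\Phi$ with exponent $\theta$ gives $\Phi(x_1)-\Phi(\bar x) \leq C\, d(0,\partial \Phi(x_1))^{1/(1-\theta)} \leq C\, C_\star^{1/(1-\theta)}\, d^{1/(1-\theta)}$; otherwise this term is nonpositive and can be dropped. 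The coupling sum is at most $\sum c_i K_i^2\, d^2 = M\, d^2$. The hypothesis $\theta \in (0,1/2]$ enters crucially here: it forces $1/(1-\theta) \leq 2$, so $d^2 \leq d^{1/(1-\theta)}$ for $d \leq 1$, and both pieces combine to $r \leq C''\, d^{1/(1-\theta)}$, which is exactly the KL inequality for $\Psi^{(s)}$ at $z^\star$ with exponent $\theta$. The hard part will be the case $\Phi(x_1) \leq \Phi(\bar x)$, where $r$ can still be strictly positive because of the quadratic coupling but the KL inequality for $\Phi$ is unavailable; handling it via the trivial $M d^2 \leq M d^{1/(1-\theta)}$ bound, together with checking that the various neighborhoods and constants can be made uniform on one sufficiently small neighborhood of $z^\star$, is the only place the scalar argument of \cite{li2016calculus} has to be adapted to the multi-term setting.
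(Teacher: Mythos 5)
Your proposal reaches the conclusion by a genuinely different route. The paper proceeds by induction on $s$, peeling off one quadratic coupling term at a time and controlling the resulting subgradient components with the power-mean inequalities of \cite[Lemmas 2.2 and 3.1]{li2016calculus} (bounds of the form $\|a+b\|^{1/\alpha}\geq\eta_1\|a\|^{1/\alpha}-\eta_2\|b\|^{1/\alpha}$), the induction hypothesis being the error-bound form of the KL inequality for $\Psi^{(s-1)}$. You instead exploit the full block structure of $\partial\Psi^{(s)}$ at once: the last block equation pins down $\|x_s-x_{s-1}\|$, the middle equations cascade upward by the triangle inequality, and only at the end do you invoke the KL inequality of $\Phi$ and take powers. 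This avoids Lemma 3.1 entirely and is more elementary; the price is that the constants $K_i$ must be tracked globally rather than absorbed one level at a time, which is harmless. Your separate treatment of the case $\Phi(x_1)\leq\Phi(\bar x)$ is correct and is the same (implicit) move the paper makes, since the error-bound form is vacuous when its right-hand side is nonpositive.

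One step does not survive scrutiny as written: you ``restrict $x$ to a neighborhood of $z^\star$ small enough that \ldots\ $d\leq 1$,'' where $d=d(0,\partial\Psi^{(s)}(x))$. The subgradient norm is not controlled by proximity to $z^\star$ (consider $\Phi(t)=|t|^{1/2}$ near $0$, where $d(0,\partial\Phi(t))\to\infty$), so no shrinking of the neighborhood guarantees $d\leq 1$, and without it the inequality $Md^2\leq Md^{1/(1-\theta)}$ fails when $d>1$. The repair is immediate and is essentially what the paper does at its step (f): impose instead the genuine neighborhood condition $\|x_{i+1}-x_i\|\leq 1$ and write $c_i\|x_{i+1}-x_i\|^2\leq c_i\|x_{i+1}-x_i\|^{1/(1-\theta)}\leq c_i(K_i d)^{1/(1-\theta)}$, using your cascading bound only inside the power $1/(1-\theta)\leq 2$. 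With that substitution the argument closes.
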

\begin{proof} 
Before commencing, note that if $\Phi$ has desingularizing function $\varphi(t)=\frac{c}{\theta}t^\theta$, the KL inequality (\ref{definitiveKL}) can be written in the equivalent ``error bound" form:
\begin{eqnarray*}
d(0,\partial\Phi(x))^{1/\alpha}\geq c'(\Phi(x)-\Phi(x^*))
\end{eqnarray*}
where $c'\triangleq c^{-1}>0$, and $\alpha\triangleq 1-\theta$. We now show that this error bound holds for the Lyapunov function in (\ref{ALyap}).

The key is to notice the recursive nature of the Lyapunov function. In particular for all $s\geq 2$
\begin{eqnarray*}
\Psi^{(s)}(x^1_s)
&=&
\Psi^{(s-1)}(x^{s-1}_1)
\\
&+&c_{s-1}\|x_{s-1}-x_{s}\|^2,
\end{eqnarray*}
with $\Psi^{(1)}(x_1^1)\triangleq\Phi(x_1)$, and $x^{s}_1\triangleq[x_1^\top,\ldots,x_{s}^\top]^\top$. 
Since $\Phi$ has KL exponent $\theta$ at $\barx$, $\Psi^{(1)}$ has KL exponent $\theta$ at $\barx$. We will prove the following inductive step for $s\geq 2$: If $\Psi^{(s-1)}$ has KL exponent $\alpha$ (with constant $c'$) at $\barx^{s-1}_1$, then $\Psi^{(s)}$ has KL exponent $\alpha$ at $\barx^{s}_1$ where $\bar{x}^{s}_1\triangleq[\barx,\barx,\ldots,\barx]^\top$ where $\barx$ is repeated $s$ times.

Proceeding, for $s\geq 2$ assume $x_1,x_2,\ldots,x_s$ are such that $\|x_{s}-x_{s-1}\|\leq 1$ and the KL inequality (\ref{definitiveKL}) applies to $\Psi^{(s-1)}$ at $\barx^{s}_1$.  Then
\begin{eqnarray*}
\partial\Psi^{(s)}(\barx_s^1)
\ni
\left(
\begin{array}{c}
\xi^{s-2}_1\\
\xi_{s-1}\\
0
\end{array}
\right)
+
\left(
\begin{array}{c}
0\\
c_{s-1}(x_{s-1}-x_{s})\\
c_{s-1}(x_{s}-x_{s-1})
\end{array}
\right)
\end{eqnarray*}
where $(\xi^{s-2}_1,\xi_{s-1})\in\partial\Psi^{(s-1)}(x^{s-2}_1,x_{s-1})$. Therefore
\begin{eqnarray*}
&& d(0,\partial\Psi^{(s)}(x^{s}_1))^{1/\alpha}
\\
&\overset{(a)}{\geq}&
C_1\left(
\inf_{(\xi^{s-2}_1,\xi_{s-1})\in\partial\Psi^{(s-1)}(x^{s-2}_1,x_{s-1})}
\|\xi_{s-1}^c\|^{1/\alpha}
\right. 
\\
&&
\left.
+
\|\xi_{s-1}
+c_{s-1}(x_{s-1}-x_{s})
\|^{1/\alpha}
+
\|c_{s-1}(x_{s}-x_{s-1})\|^{1/\alpha}
\right)
\\
&\overset{(b)}{\geq}&
C_1\left(
\inf_{(\xi_{s-1}^c,\xi_{s-1})\in\partial\Psi^{(s-1)}(x^{s-2}_1,x_{s-1})}
\|\xi_{s-1}^c\|^{1/\alpha}
+
\eta_1\|\xi_{s-1}\|^{1/\alpha}
\right. 
\\
&&
\left.
-
\eta_2\|c_{s-1}(x_{s-1}-x_{s})\|^{1/\alpha}
+
\|c_{s-1}(x_{s}-x_{s-1})\|^{1/\alpha}
\right)
\\
&\overset{(c)}{\geq}&
C_2\left(
\inf_{(\xi_{s-1}^c,\xi_{s-1})\in\partial\Psi^{(s-1)}(x^{s-2}_1,x_{s-1})}
\|\xi_{s-1}^c\|^{1/\alpha}
+
\|\xi_{s-1}\|^{1/\alpha}
\right. 
\\
&&
\left. 
+
\frac{c_{s-1} c'}{2}
\|x_{s}-x_{s-1}\|^{1/\alpha}
\right)
\\
&\overset{(d)}{\geq}&
C_3
\left(
\inf_{(\xi_{s-1}^c,\xi_{s-1})\in\partial\Psi^{(s-1)}(x^{s-2}_1,x_{s-1})}
\left\|
\begin{array}{c}
\xi_{s-1}^c\\
\xi_{s-1}
\end{array}
\right\|^{1/\alpha}
\right. 
\\
&&
\left. 
+
\frac{c_{s-1} c'}{2}
\|x_{s}-x_{s-1}\|^{1/\alpha}
\right)
\\
&\overset{(e)}{\geq}&
C_3 c'
\left(
\Psi^{(s-1)}(x^{s-1}_1)-\Psi^{(s-1)}(\barx^{s-1}_1)
\right. 
\\
&&
\left.
+
\frac{c_{s-1}}{2}
\|
x_{s}-x_{s-1}
\|^{1/\alpha}
\right)
\\
&\overset{(f)}{\geq}&
C_3 c'
\left(
\Psi^{(s-1)}(x^{s-1}_1)-\Psi^{(s-1)}(\barx^{s-1}_1)
\right. 
\\
&&
\left.
+
\frac{c_{s-1}}{2}
\|
x_{s}-x_{s-1}
\|^{2}
\right)
\\
&=&
C_3 c'\left(\Psi^{(s)}(x^{s}_1)-\Psi^{(s)}(\barx^{s}_1)\right).
\end{eqnarray*}
Now (a) and (d) follow from \cite[Lemma 2.2]{li2016calculus}, and  (b) follows from \cite[Lemma 3.1]{li2016calculus}. Next (c) follows because $\eta_1>0$, $0<\eta_2<1$, and we have decreased $C_2$ to compensate for factoring out these coefficients. Further (e) follows by the KL inequality. Finally (f) follows because $\|x_s-x_{s-1}\|\leq 1$ and $\alpha^{-1}\in(1,2]$.  Since $\Psi^{(1)}$ has KL exponent $\alpha$ at $\barx$, then so does $\Psi^{(s)}$ at $[\barx,\barx,\ldots,\barx]^\top$ (of length $s$) for all $s\geq 2$, which concludes the proof.
\end{proof}

The power of this theorem is that when the KL exponent of the objective function $\Phi$ is known, it also applies to the Lyapunov function in (\ref{defLyap}). This allows us to exactly determine the convergence rate of MiFB via Theorems \ref{ThmOurRate} and \ref{ThmIterates}. 

\section{Numerical Results}

\subsection{One Dimensional Polynomial}
This simple experiment verifies the convergence rates derived in Theorem \ref{ThmOurRate} for MiFB. Consider the one dimensional function $f(x)=|x|^p$ for $p>2$. Use $g(x)=+\infty$ if $|x|>1$ and $0$ otherwise. The proximal operator is simple projection and $f$ is $p(p-1)$-smooth on this set. The function $\Phi=f+g$ is semialgebraic with $\varphi(t) =p t^{1/p}$, i.e. $\theta=1/p$. Therefore Theorem 2 predicts $O\left(k^{-\frac{p}{p-2}}\right)$ rates for MiFB, which is verified in Fig. \ref{figPoly} for three parameter choices in the cases $p=4,18$. For simplicity we ignore constants and focus on the sublinear order. For $p\leq 4$ this convergence rate is better than that of Nesterov's accelerated method \cite{nesterov2004introductory}, for which only $O(1/k^2)$ worst-case rate is known. Faster rates are achievable due to the additional knowledge of the KL exponent.
\begin{figure}[!h]
\centering
\includegraphics[width=1.6in]{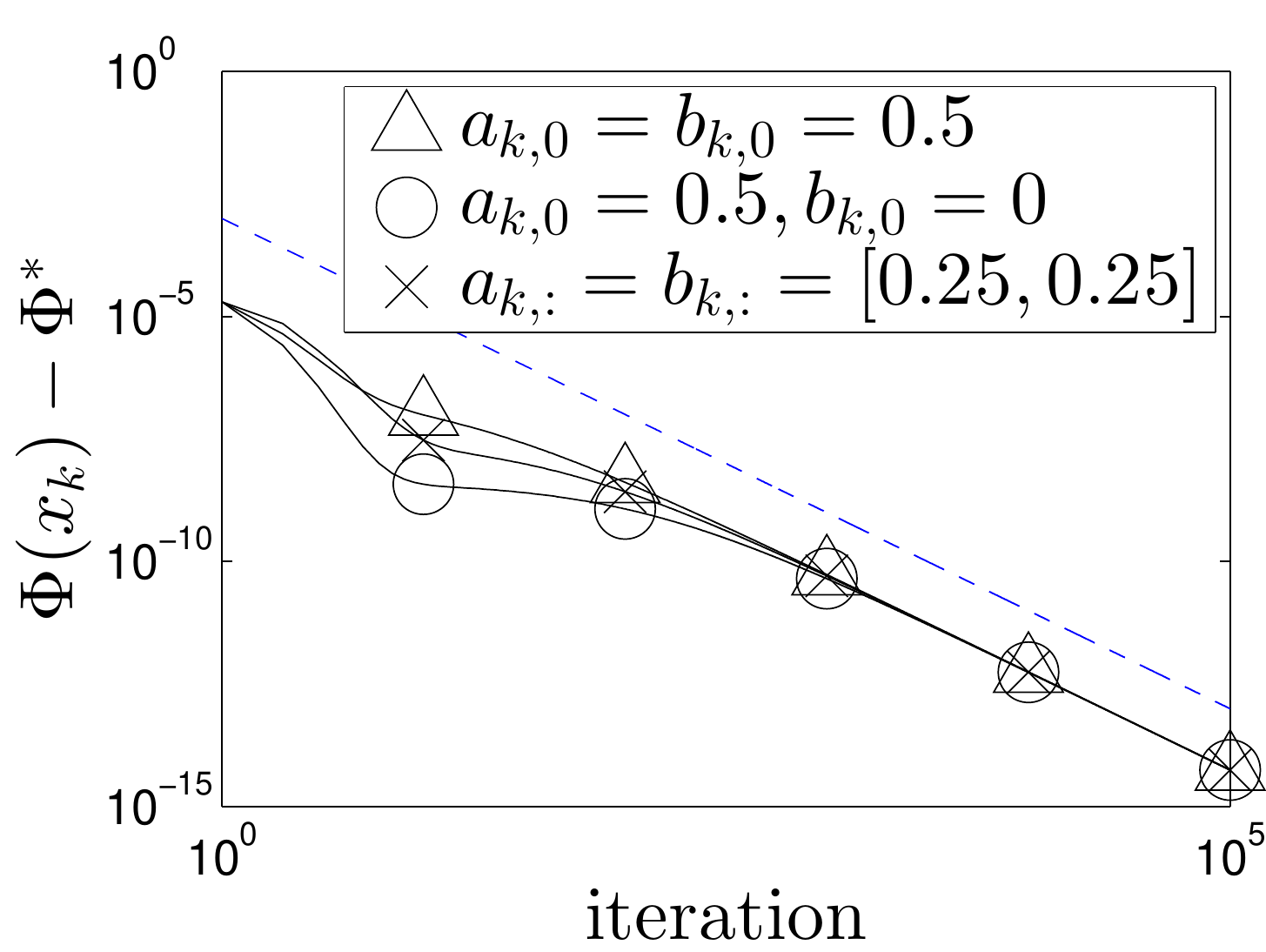}
\includegraphics[width=1.6in]{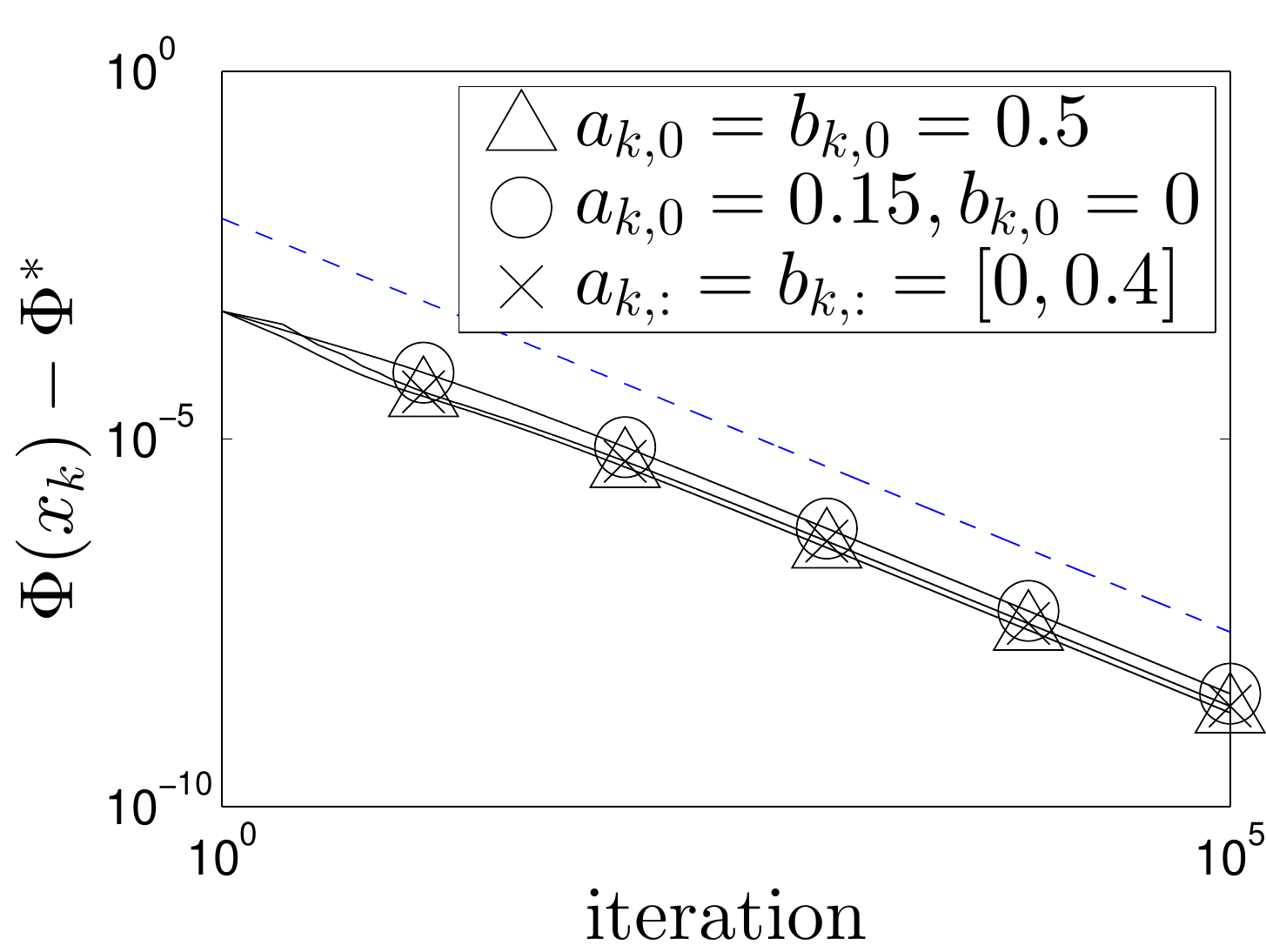}
% where an .eps filename suffix will be assumed under latex, 
% and a .pdf suffix will be assumed for pdflatex; or what has been declared
% via \DeclareGraphicsExtensions.
\caption{(Left) $p=4$, (Right) $p=18$, $\Phi^*=0$. The dotted line is the slope of the predicted $O\left(k^{-\frac{p}{p-2}}\right)$ rate (i.e. ignoring constants). Note $a_{k,:}\triangleq [a_{k,0},a_{k,1}]$ and these are log-log plots.}
\label{figPoly}
\end{figure}

\subsection{SCAD and $\ell_1$ regularized Least-Squares}
 We solve Prob. (\ref{prb1}) with $f(x)=\frac{1}{2}\|Ax - b\|_2^2$ and $g(x)=\sum_{i=1}^n r(x_i)$ where $r$ is: 1) the SCAD regularizer defined as
\begin{eqnarray*}
r(x_i)=
\left\{
\begin{array}{ll}
\lambda|x_i| & \text{if }|x_i|\leq\lambda\\
-\frac{|x_i|^2 - 2a\lambda|x_i|+\lambda^2}{2(a-1)} & \text{if }\lambda<|x_i|\leq a\lambda \\
\frac{(a+1)\lambda^2}{2} & \text{if }|x_i|>a\lambda,
\end{array}
\right.
\end{eqnarray*}
and 2) the absolute value $r(x_i)=\lambda|x_i|$ leading to the $\ell_1$-norm. 
%is the SCAD regularizer often used in regression \cite{fan2001variable}. 
In both cases the proximal operator w.r.t. $r$ is easily computed. 
It was shown in \cite[Sec. 5.2]{li2016calculus} and \cite[Lemma 10]{bolte2015error} that both of these objective functions are KL functions with exponent $\theta=1/2$. 
%Note in the case of the $\ell_1$ norm, since the objective function is convex the analysis of \cite[Thm. 1]{Johnstone2016Local} implies weaker restrictions on the parameters than \cite[eq. (2.2)--(2.3)]{liang2016multi}. 

We choose $A\in\re^{500\times 1000}$ having i.i.d. $\mathcal{N}(0,10^{-4})$ entries, and $b = Ax_0$, where $x_0\in\re^{1000}$ has $50$ nonzero $\mathcal{N}(0,1)$-distributed entries. For SCAD we use $a=5$ and $\lambda=1$ and for the $\ell_1$ norm we use $\lambda=0.01$.

\begin{figure}[!h]
\centering
\includegraphics[width=1.6in]{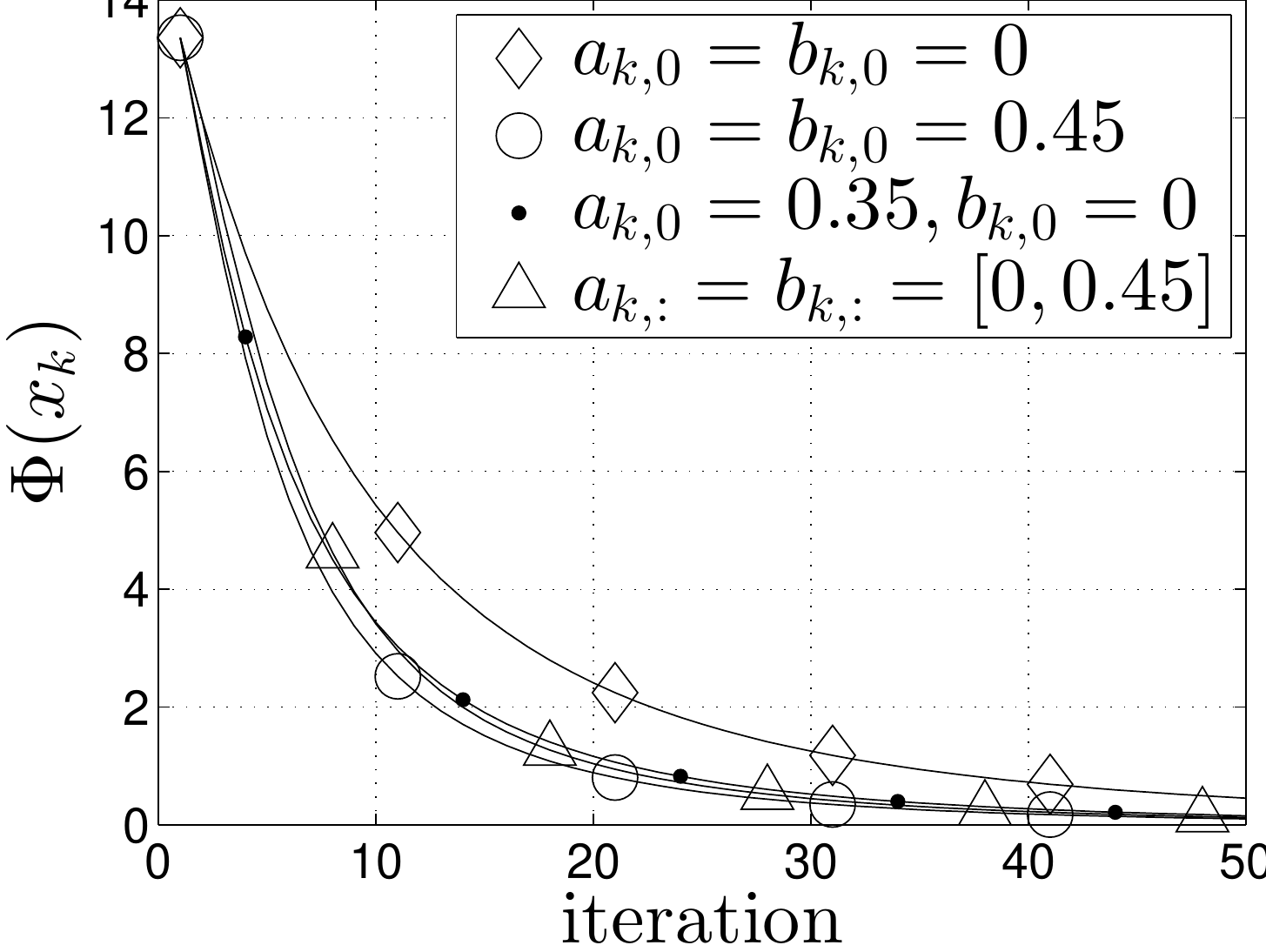}
\includegraphics[width=1.6in]{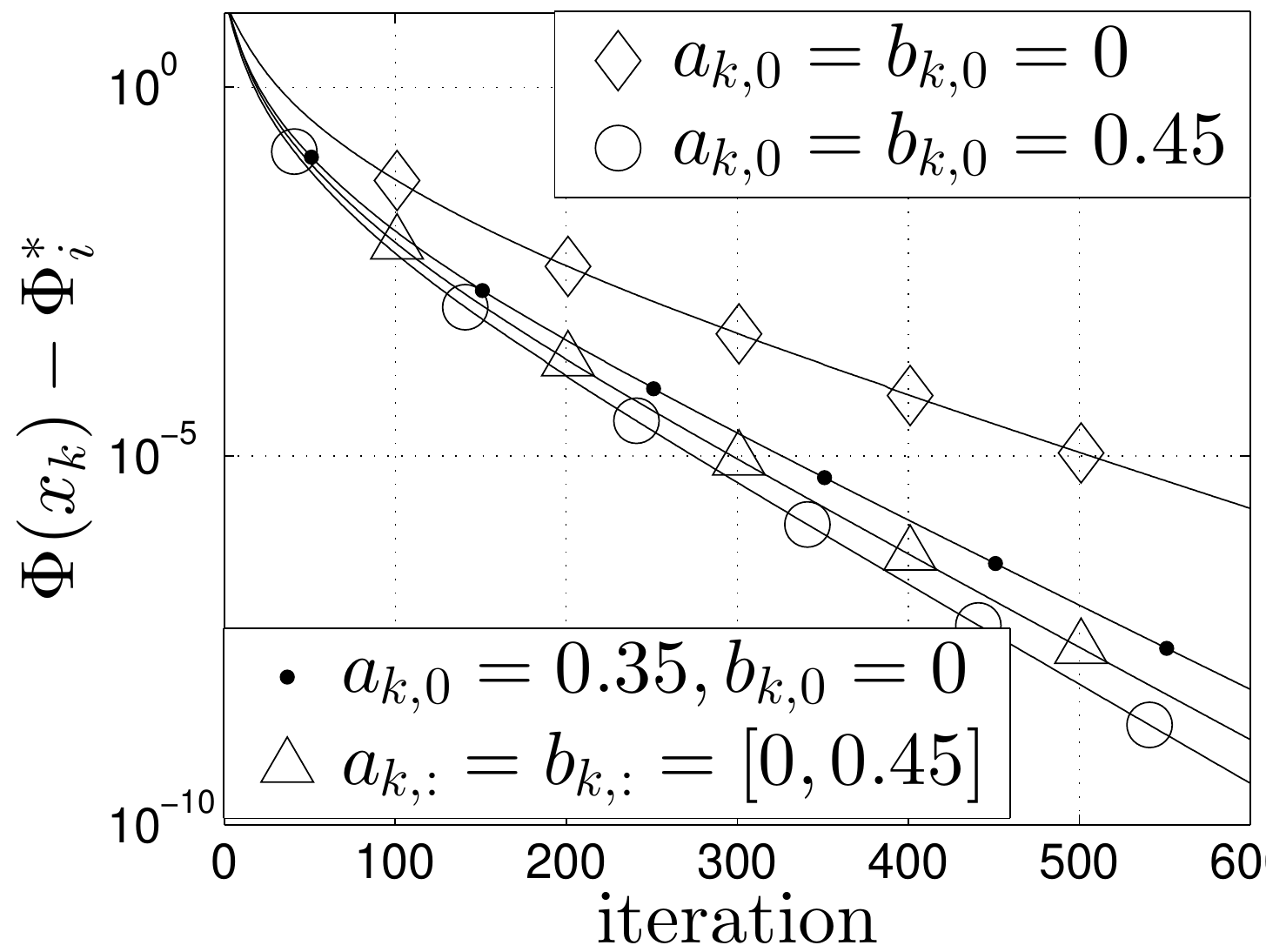}
\includegraphics[width=1.6in]{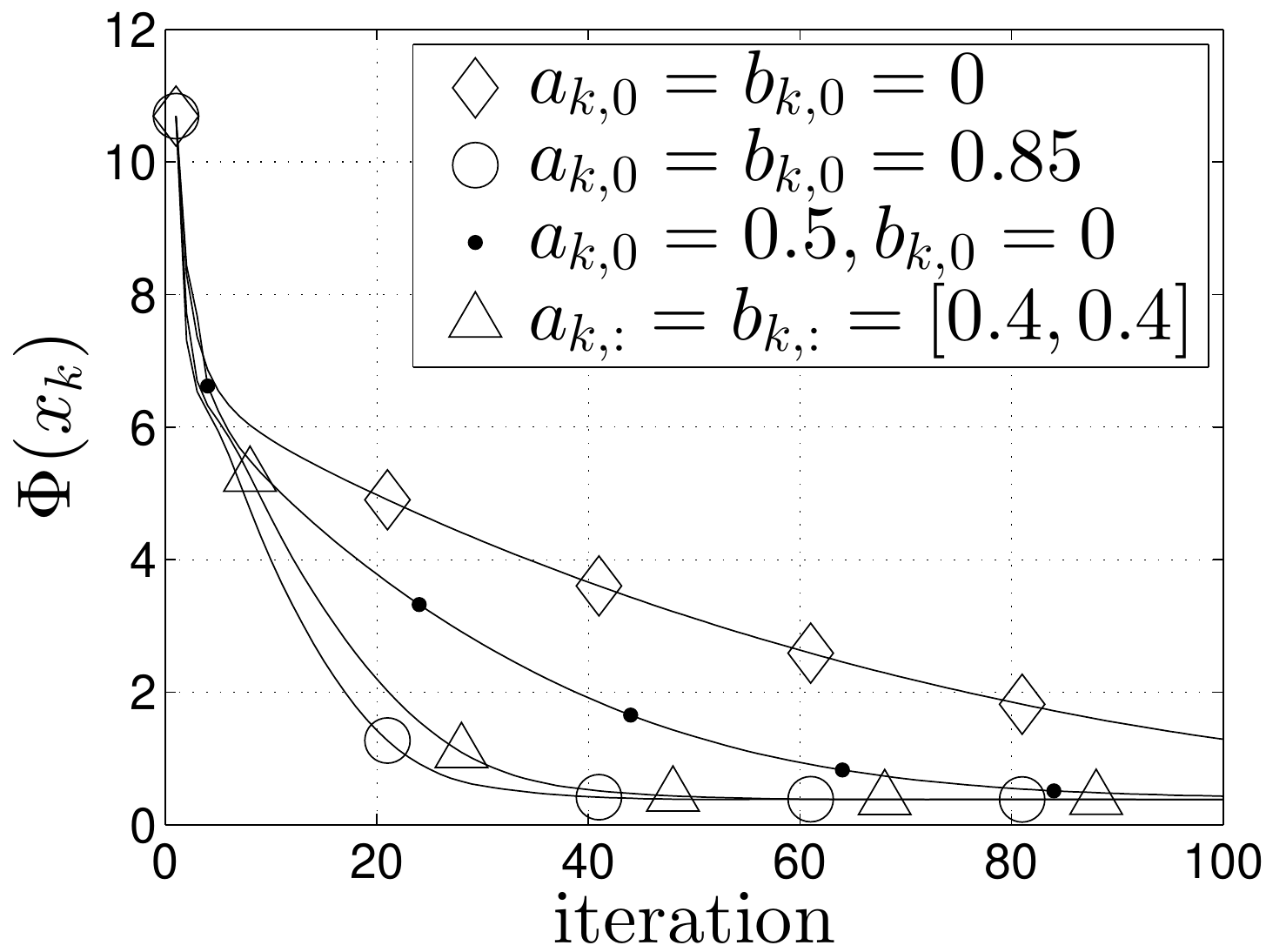}
\includegraphics[width=1.6in]{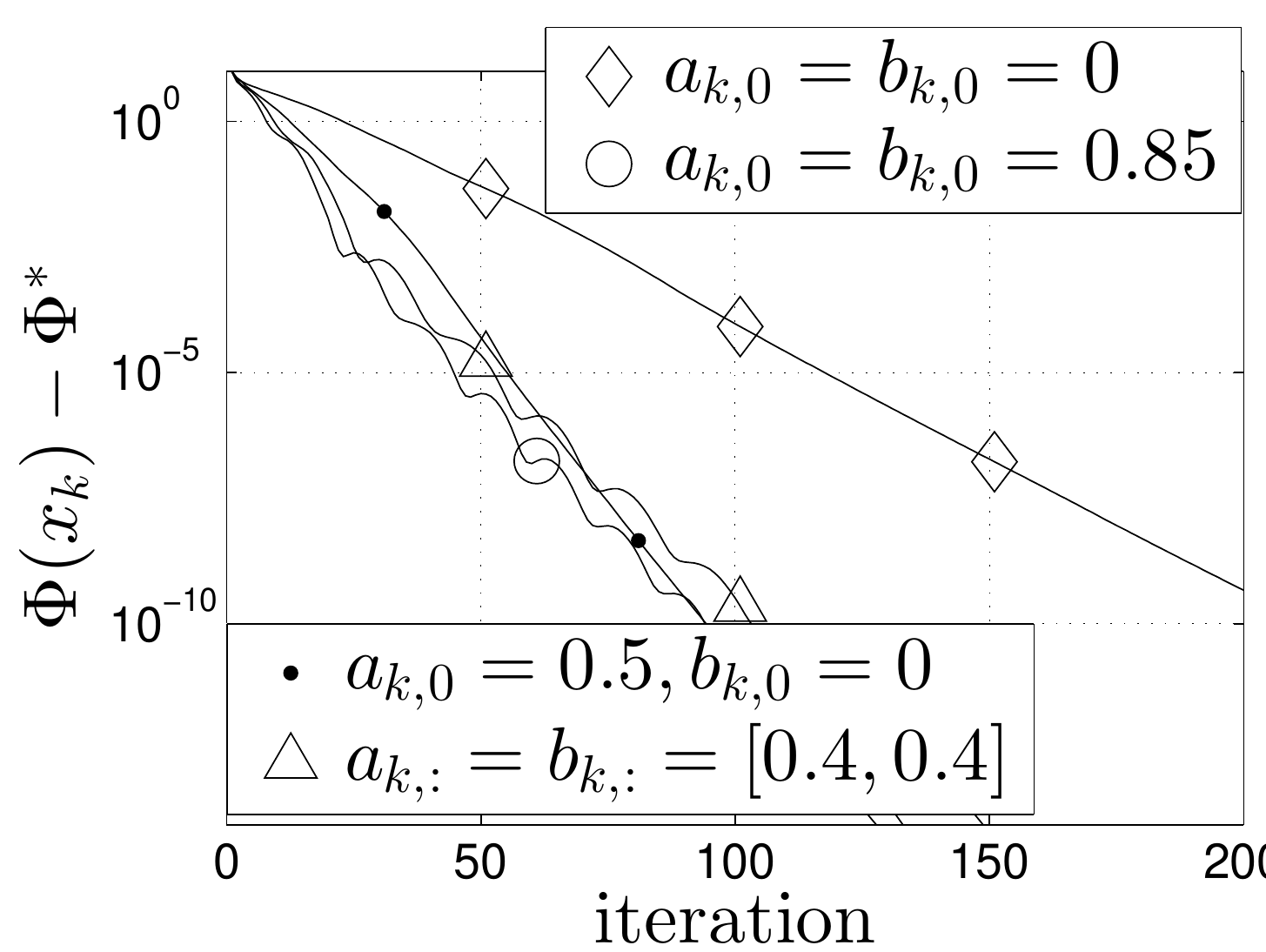}
% where an .eps filename suffix will be assumed under latex, 
% and a .pdf suffix will be assumed for pdflatex; or what has been declared
% via \DeclareGraphicsExtensions.
\caption{(Top Left) Plot of $\Phi(x_k)$ for SCAD least-squares. (Top Right) Plot of $\Phi(x_k)-\Phi_i^*$ with a logarithmic $y$-axis for SCAD least-squares. As SCAD least-squares is a nonconvex problem, each of the four considered parameter choices may converge to a different objective function value $\Phi_i^*$ for $i=1,2,3,4$. (Bottom Left) Plot of $\Phi(x_k)$ for $\ell_1$ least-squares. (Bottom Right) Plot of $\Phi(x_k)-\Phi^*$ with a logarithmic $y$-axis for $\ell_1$ least-squares.} 
\label{fig}
\end{figure}

We consider four valid parameter choices. To isolate the effect of inertia, all choices used the same randomly chosen starting point and fixed stepsize, $\gamma_k=0.1/L$ for SCAD and $\gamma_k=1/L$ for $\ell_1$. The inertial parameters were chosen so that $\delta>0$ (defined in (\ref{deltaDef})) for SCAD and to satisfy \cite[Thm. 1]{Johnstone2016Local} for the $\ell_1$ problem. The two figures on the right corroborate Theorem 2 in that all considered parameter choices converge linearly to their limit, which was estimated by using the attained objective function value after $1000$ iterations. For the nonconvex SCAD this is a new result. For $\ell_1$-regularized least squares, inertial methods have been shown to achieve \emph{local} linear convergence in \cite{Johnstone2016Local,liang2015activity} under additional strict complementarity or restricted strong convexity assumptions. However, our analysis, which is based on the KL inequality, does not explicitly require these additional assumptions, as the objective function always has a KL exponent of $1/2$ \cite[Lemma 10]{bolte2015error}. Furthermore our result proves \emph{global} linear convergence, in that the KL inequality (\ref{definitiveKL}) holds for all $k$, implying $k_0=1$ in (\ref{KLeq}) and (\ref{eq_linear}) holds for all $k$. In addition the two left figures show that the inertial choices appear to provide acceleration relative to the standard non-inertial choice which for SCAD is a new observation.  This does not conflict with Theorem 2 which only shows that both non-inertial and inertial methods will converge \emph{linearly}, however the convergence factor may be different. Estimating the factor is beyond the scope of this paper and we leave it for future work. Finally we mention that FISTA \cite{beck2009fast} and other Nesterov-accelerated methods \cite{nesterov2004introductory} are not applicable to SCAD as it is nonconvex.

\bibliographystyle{C:/Users/prjohns2.UOFI/Dropbox/Work_by_semester/ResearchF14on/ICASSP14/IEEEbib}
\bibliography{C:/Users/prjohns2.UOFI/Dropbox/Work_by_semester/ResearchF14on/ICASSP14/refs}

\end{document}